\long\def\comment#1{}
\newcommand{\fhi}{\varphi}
\newcommand{\norm}[1]{\lVert#1\rVert}
\newcommand{\ipr}[2]{\left\langle #1, #2 \right\rangle}
\newcommand{\numbersystem}[1]{\mathbb{#1}}
\newcommand{\R}{\numbersystem{R}}
\newcommand{\abs}[1]{\lvert#1\rvert}
\newtheorem{theorem}{Theorem}
\def\@normalsize{\@setsize\normalsize{10pt}\xpt\@xpt
\abovedisplayskip 10pt plus2pt minus5pt\belowdisplayskip \abovedisplayskip \abovedisplayshortskip \z@plus3pt\belowdisplayshortskip 6pt plus3pt
minus3pt\let\@listi\@listI}
\def\subsize{\@setsize\subsize{12pt}\xipt\@xipt}
\def\section{\@startsection {section}{1}{\z@}{1.0ex plus
1ex minus .2ex}{.2ex plus .2ex}{\large\bf}}
\def\subsection{\@startsection
   {subsection}{2}{\z@}{.2ex plus 1ex} {.2ex plus .2ex}{\subsize\bf}}
\begin{document}
\date{}

\title{\huge \bf {Designing Optimal Flow Networks}\thanks{This research was supported by an ARC
Linkage Grant with Newmont Australia Limited.}}

\author{M.~G.~Volz\thanks{Department of Electrical and Electronic Engineering,
The University of Melbourne, Victoria 3010,  Australia.} \and M.~Brazil\footnotemark[2] \and  K.~J.~Swanepoel\thanks{Fakult\"at f\"ur
Mathematik,
    Technische Universit\"at Chemnitz,
    D-09107 Chemnitz, Germany} \and D.~A.~Thomas\thanks{Department of Mechanical Engineering,
The University of Melbourne, Victoria 3010,  Australia.}}


\maketitle



{\hspace{1pc} {\it{\small Abstract}}{\bf{\small---We investigate the problem of designing a minimum cost flow network interconnecting $n$
sources and a single sink, each with known locations and flows. The network may contain other unprescribed nodes, known as Steiner points. For
concave increasing cost functions, a minimum cost network of this sort has a tree topology, and hence can be called a Minimum Gilbert
Arborescence (MGA). We characterise the local topological structure of Steiner points in MGAs for linear cost functions. This problem has
applications to the design of drains, gas pipelines and underground mine access.

\em Keywords: optimisation, networks, network flows, Steiner trees}}
 }

\section{Introduction}
One of the most important advances in physical network design optimisation since the 1960's has been the development of theory for solving the
\emph{Steiner Minimum Tree (SMT) problem}.
 This problem asks for a shortest
network spanning a given set of points, called \emph{terminals}, in a given metric space. It differs from the minimum spanning tree problem in
that additional points, called \emph{Steiner points}, can be included
 to create a spanning network that is shorter than would
otherwise be possible. This has numerous applications, including the design of telecommunications or transport networks for the problem in the
Euclidean plane (the $l_2$ metric), and the design of microchips for the problem in the rectilinear plane (the $l_1$ metric)~\cite{hwang-1992}.

Gilbert~\cite{gilbert-1967} proposed a generalisation of the SMT problem whereby symmetric non-negative \emph{flows} are assigned between each
pair of terminals. The aim is to find a least cost network interconnecting the terminals, where each edge has an associated total such that the
flow conditions between terminals are satisfied, and Steiner points satisfy Kirchhoff's rule (ie, the net incoming and outgoing flows at each
Steiner point are equal). The cost of an edge is its length multiplied by a non-negative \emph{weight}. The weight is determined by a given
function of the total flow being routed through that edge, where the function satisfies a number of conditions. The \emph{Gilbert network
problem} (GNP) asks for a minimum-cost network spanning a given set of terminals with given flow demands and a given weight function.

An important variation on this problem that we will show to be a special case of the GNP occurs when the  terminals consist of $n$ sources and a
unique sink, and all flows not between a source and the sink are zero. This problem has applications to drainage networks~\cite{lee-1976}, gas
pipelines~\cite{bhaskaran-1979}, and underground mining networks~\cite{brazil-2000}.

If the weight function is concave and increasing, the resulting minimum network has a tree topology, and provides a directed path from each
source to the sink. Such a network can be called an \emph{arborescence}, and we refer to this special case of the GNP as the \emph{Gilbert
arborescence problem} (GAP). Traditionally, the term `arborescence' has been used to describe a rooted tree providing directed paths from the
unique root (source) to a given set of sinks. Here we are interested in the case where the flow directions are reversed, i.e. flow is from $n$
sources to a unique sink. It is clear, however, that the resulting weights for the two problems are equivalent, hence we will continue to use
the term `arborescence' for the latter case. Moreover, if we take the sum of these two cases, and rescale the flows (dividing flows in each
direction by $2$), then again the weights for the total flow on each edge are the same as in the previous two cases. This justifies our claim
that the GAP can be treated as a special case of the GNP. It will be convenient, however, for the remainder of this paper to think of
arborescences as networks with a unique sink.

A \emph{minimum Gilbert arborescence} (MGA) is a (global) minimum-cost arborescence for a given set of terminals and flows, and a given cost
function. All flows in the network are directed towards the unique sink. MGAs have been used to model drainage networks, such as plumbing
networks in buildings~\cite{lee-1976}, as well as gas pipeline networks~\cite{bhaskaran-1979}. In both cases heuristics were provided for
obtaining low-cost solutions. A special complication for these networks is that the cost of an edge depends on the diameter of the pipe, and the
diameter of the pipe depends on its length. This complication will not be considered in this work.

More recently, MGAs have been used to model underground mining networks~\cite{brazil-2000}. Given a set of underground locations, called
\emph{draw points}, and their estimated ore tonnages, the development and haulage costs associated with an underground mine can be minimised by
finding an MGA interconnecting the underground points with a fixed breakout point at the surface. Although the Euclidean metric is  useful in
some situations, more generally additional constraints must be imposed for truck navigability such as a gradient constraint, which can be
handled by altering the metric~\cite{brazil-2000}.

In Section~2 we specify the nature of the weight function that we consider in this paper, and formally define minimum Gilbert networks and
Gilbert arborescences in Minkowski spaces (which generalise Euclidean spaces). In Section~3 we give a general topological characterisation of
Steiner points in such networks, for smooth Minkowski spaces. We then apply this characterisation, in Section~4, to the smooth Minkowski plane
with a linear weight function to show that in this case all Steiner points have degree $3$.

\section{Preliminaries}

The cost functions for the networks we consider in this paper make use of more general norms than simply the Euclidean norm. Hence, we introduce
a generalisation of Euclidean spaces, namely finite-dimensional normed spaces or Minkowski spaces. See \cite{Thompson} for an introduction to
Minkowski geometry.

A \emph{Minkowski space} (or \emph{finite-dimensional Banach space}) is $\R^n$ endowed with a \emph{norm} $\norm{\cdot}$, which is a function
$\norm{\cdot}:\R^n\to\R$ that satisfies
\begin{itemize}
\item $\norm{x}\geq 0$ for all $x\in \R^n$, $\norm{x}=0$ only if $x=0$,
\item $\norm{\alpha x}=\abs{\alpha}\norm{x}$ for all $\alpha\in\R$ and $x\in \R^n$, and
\item $\norm{x+y}\leq\norm{x}+\norm{y}$.
\end{itemize}

We now discuss some aspects of the SMT problem, since this is a special case of the GNP, where all flow are zero. Our terminology for the SMT
problem is based on that used in~\cite{hwang-1992}. Let $T$ be a network interconnecting a set $N = \{p_1,\ldots,p_n\}$ of points, called
\emph{terminals}, in a Minkowski space. A vertex in $T$ which is not a terminal is called a \emph{Steiner point}. Let $G(T)$ denote the
\emph{topology} of $T$, i.e. $G(T)$ represents the graph structure of $T$ but not the embedding of the Steiner points. Then $G(T)$ for a
shortest network $T$ is necessarily a tree, since if a cycle exists, the length of $T$ can be reduced by deleting an edge in the cycle. A
network with a tree topology is called a \emph{tree}, its links are called \emph{edges}, and its nodes are called \emph{vertices}. An edge
connecting two vertices $a,b$ in $T$ is denoted by $ab$, and its  length by $\norm{a-b}$.

The
\emph{splitting} of a vertex is the operation of disconnecting two edges $av,bv$ from a vertex $v$ and connecting $a,b,v$ to a newly created
Steiner point. 
Furthermore, though the positions of terminals are fixed, Steiner points can be subjected to arbitrarily small movements provided the resulting
network is still connected. Such movements are called \emph{perturbations}, and are useful for examining whether the length of a network is
minimal.

A \emph{Steiner tree} (ST) is a tree whose length cannot be shortened by a small perturbation of its Steiner points, even when splitting is
allowed. By convexity, an ST is a minimum-length tree for its given topology. A \emph{Steiner minimum tree} (SMT) is a shortest tree among all
STs. For many Minkowski spaces bounds are known for the maximum possible degree of a Steiner point in an ST, giving useful restrictions on the
possible topology of an SMT. For example, in Euclidean space of any dimension every Steiner point in an ST has degree three. Given a set $N$ of
terminals, the \emph{Steiner problem} (or \emph{Steiner Minimum Tree problem}) asks for an SMT spanning $N$.

Gilbert~\cite{gilbert-1967} proposed the following generalisation of the Steiner problem in Euclidean space, which we now extend to Minkowski
space. Let $T$ be a network interconnecting a set $N = \{p_1,\ldots,p_n\}$ of $n$ terminals in a Minkowski space. For each pair $p_i,p_j,\;i\neq
j$ of terminals, a non-negative \emph{flow} $t_{ij} = t_{ji}$ is given. The cost of an edge $e$ in $T$ is $w(t_e)l_e$, where $l_e$ is the length
of $e$, $t_e$ is the total flow being routed through $e$, and  weight $w(\cdot)$ is a unit cost function satisfying

\begin{eqnarray}
  w(t) &\geq& 0\;\;\;\;\mathrm{and}\;\;\;\;w(t) > 0\;\mathrm{if}\;t>0 \label{eq:cost-ftn-non-neg} \\
  w(t_1 + t_2) &\geq& w(t_1)\;\;\; \forall \;t_2 > 0 \label{eq:cost-ftn-non-dec} \\
  w(t_1 + t_2) &\leq& w(t_1) + w(t_2)\;\;\; \forall \;t_1,t_2 > 0 \label{eq:cost-ftn-trnglr}\\
  w(\cdot)&\mbox{{is}}& \mbox{{concave}} \label{eq:cost-ftn-concave}
\end{eqnarray}

A network satisfying Conditions~(\ref{eq:cost-ftn-non-neg}) - (\ref{eq:cost-ftn-trnglr}) is called a \emph{Gilbert network}. For a given edge
$e$ in $T$, $w(t_e)$ is called the \emph{weight} of $e$, and is also denoted simply by $w_e$. The \emph{total cost} of a Gilbert network $T$ is
the sum of all edge costs, i.e.
\begin{eqnarray*}
  C(T) &=& \sum_{e\in E}w(t_e)l_e
\end{eqnarray*}
\noindent where $E$ is the set of all edges in $T$. A Gilbert network $T$ is a \emph{minimum Gilbert network} (MGN), if $T$ has the minimum cost
of all Gilbert networks spanning the same point set $N$, with the same flow demands $t_{ij}$ and the same cost function $w(\cdot)$. By the
arguments of~\cite{cox-1998}, an MGN always exists in a Minkowski space when Conditions~(\ref{eq:cost-ftn-non-neg}) -
(\ref{eq:cost-ftn-concave}) are assumed for the weight function.

Conditions~(\ref{eq:cost-ftn-non-neg}) - (\ref{eq:cost-ftn-trnglr}) above ensure that the weight function is non-negative, non-decreasing and
triangular, respectively. These are natural conditions for most applications. Unfortunately, the first three conditions alone do not guarantee
that a minimum Gilbert network is a tree. An example of such a minimum network where the flow necessarily splits is given in~\cite{volz-2009}.
(Note that in~\cite{cox-1998}, Condition~(\ref{eq:cost-ftn-trnglr}), which we call the \emph{triangular condition}, was incorrectly interpreted
as concavity of the cost function.)

 The \emph{Gilbert network problem} (GNP) is to find an MGN for a
given terminal set $N$, flows $t_{ij}$ and cost function $w(\cdot)$. Since its introduction in~\cite{gilbert-1967}, various aspects of the GNP
have been studied, although the emphasis has been on discovering geometric properties of MGNs
(see~\cite{cox-1998},~\cite{thomas-2006},~\cite{trietsch-1985},~\cite{trietsch-1999}). As in the Steiner problem, additional vertices can be
added to create a Gilbert network whose cost is less than would otherwise be possible, and these additional points are again called
\emph{Steiner points}. A Steiner point $s$ in $T$ is called \emph{locally minimal} if a perturbation of $s$ does not reduce the cost of $T$. A
Gilbert network is called \emph{locally minimal} if no perturbation of the Steiner points reduces the cost of $T$.

The special case of the Gilbert model that is of interest in this work is when $N = \{p_1,\ldots,p_n\, q\}$ is a set of terminals in a Minkowski
space, where $p_1,\ldots,p_{n}$ are \emph{sources} with respective positive flows $t_1,\ldots,t_{n}$, and $q$ is the \emph{sink}. All flows are
between the sources and the sink; there are no flows between sources. It has been shown in~\cite{thomas-2006} that concavity of the weight
function implies that an MGN of this sort is a tree. Hence we refer to an MGN with this flow structure as a \emph{minimum Gilbert arborescence}
(MGA), and, as mentioned in the introduction, we refer to the problem of constructing such an MGA as the \emph{Gilbert arborescence problem}
(GAP).

If $v_1$ and $v_2$ are two adjacent vertices in a Gilbert arborescence, and flow is from $v_1$ to $v_2$ then we denote the edge connecting the
two vertices by $v_1v_2$.

\section{Characterisation of Steiner Points}\label{section:characterisation}
In this section, we generalise a theorem of Lawlor and Morgan \cite{lawlor-1994} to give a local characterisation of Steiner points in an MGA.
The characterisation in \cite{lawlor-1994} holds for SMTs, which correspond to the case of MGAs with a constant weight function. Their theorem
is formulated for arbitrary Minkowski spaces with differentiable norm. Our proof is based on the proof of Lawlor and Morgan's theorem given in
\cite{swanepoel-1999}. A generalisation to non-smooth norms is contained in~\cite{swanepoel-2007} for SMTs and in \cite{thesis} for MGAs. Such a
generalisation is much more complicated and involves the use of the subdifferential calculus.

We first introduce some necessary definitions relating to Minkowski geometry, in particular with relation to dual spaces. For more details, see
\cite{Thompson}.

We denote the inner product of two vectors $x,y\in\R^n$ by $\ipr{x}{y}$. For any given norm $\norm{\cdot}$, the dual norm $\norm{\cdot}^\ast$ is
defined as follows:
\[\norm{z}^\ast = \sup_{\norm{x}\leq 1}\ipr{z}{x}.\]

We say that a Minkowski space $(\R^n,\norm{\cdot})$ is {\em smooth} if the norm is differentiable at any $x\neq o$, i.e., if
\[\lim_{t\to 0} \frac{\norm{x+th}-\norm{x}}{t} =: f_x(h)\]
exists for all $x,h\in \R^n$ with $x\neq o$. It follows easily that $f_x$ is a linear operator $f_x:\R^n\to\R$ and so can be represented by a
vector $x^\ast\in\R^n$, called the dual vector of $x$, such that $\ipr{x^\ast}{y}=f_x(y)$ for all $y\in\R^n$, and $\norm{x^\ast}^\ast=1$. In
fact $x^\ast$ is just the gradient of the norm at $x$, i.e., $x^\ast=\nabla\norm{x}$.

More generally, even if the norm is not differentiable at $x$, a vector $x^\ast\in \R^n$ is a {\em dual vector} of $x$ if $x^\ast$ satisfies
$\ipr{x^\ast}{x}=\norm{x}$ and $\norm{x^\ast}^\ast=1$. By the Hahn-Banach separation theorem, each non-zero vector in a Minkowski space has at
least one dual vector. A Minkowski space is then smooth if and only if each non-zero vector has a unique dual vector.

A norm is {\em strictly convex} if $\norm{x}=\norm{y}=1$ and $x\neq y$ imply that $\norm{\frac{1}{2}(x+y)}<1$, or equivalently, that the
\emph{unit sphere} \[ S(\norm{\cdot}) = \{x\in\R^n: \norm{x}=1\}\]  does not contain any straight line segment. A norm $\norm{\cdot}$ is smooth
[strictly convex] if and only if the dual norm $\norm{\cdot}^\ast$ is strictly convex [smooth, respectively].

\begin{theorem}\label{theorem:gilb-arb-charac}
Suppose a smooth Minkowski space $(\R^n,\norm{\cdot})$ is given together with  a concave weight function $w$, sources $p_1,\dots,p_n\in\R^n$,
and a single sink $q\in\R^n$, all different from the origin $o$. Let the flow associated with $p_i$ be $t_i$. (See Figure~\ref{Fig:th1}.)
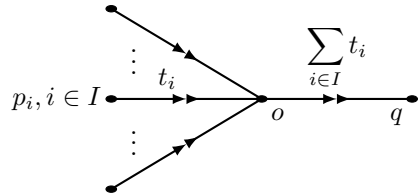
\begin{figure}[htb]
{\qquad\qquad\begin{tikzpicture}[yscale=0.6]
\filldraw (-1,0) coordinate (x) circle (2pt) node[below right] {$o$}; \filldraw (1,0) coordinate (o) circle (2pt) node[below left] {$q$};
\filldraw (-3,2) coordinate (p1) circle (2pt) node[below] {}; \filldraw (-3,0) coordinate (p2) circle (2pt) ; \filldraw (-3,-2) coordinate (pn)
circle (2pt) node[below] {}; \draw (-2.7,-0.8) node {$\vdots$} ; \draw (-2.7,1) node {$\vdots$} ;
\begin{scope}[thick]
\draw[-latex] (p1) -- (-2,1); \draw[latex reversed-] (-2,1) -- (x); \draw[-latex] (p2) node[left] {$p_i, i\in I$} -- (-2,0) node [above left]
{$t_i$}; \draw[latex reversed-] (-2,0) -- (x); \draw[-latex] (pn) -- (-2,-1); \draw[latex reversed-] (-2,-1) -- (x); \draw[-latex] (x) -- (0,0)
node [above] {$\displaystyle\sum_{i\in I} t_i$}; \draw[latex reversed-] (0,0) -- (o);
\end{scope}
\end{tikzpicture}} \caption{A Gilbert network with star topology.} \label{Fig:th1}
\end{figure}
For each $p_i$ let $p_i^\ast$ denote its dual vector, and let $q^\ast$ denote the dual vector of $q$. Then the Gilbert arborescence with edges
$op_i$, $i=1,\dots,n$ and $oq$, where all flows are routed via the Steiner point $o$, is a minimal Gilbert arborescence if and only if
\begin{equation}\label{balancing}
\sum_{i=1}^n w(t_i)p_i^\ast + w(\sum_{i=1}^n t_i)q^\ast = o
\end{equation}
and
\begin{equation}\label{collapsing}
\norm{\sum_{i\in I} w(t_i)p_i^\ast}^\ast\leq w(\sum_{i\in I} t_i)\text{ for all $I\subseteq\{1,\dots,n\}$.}
\end{equation}
\end{theorem}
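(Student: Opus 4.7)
The plan is to derive both conditions as first-order necessary conditions on the star configuration, using smoothness of the norm to convert directional derivatives of $\norm{\cdot}$ into pairings $\ipr{\cdot^\ast}{\cdot}$, and then to promote these first-order conditions to genuine local minimality using convexity of the cost as a function of Steiner point positions. The key identity is $\tfrac{d}{d\epsi}\norm{x - \epsi h}\big|_{\epsi=0} = -\ipr{x^\ast}{h}$, valid for $x \neq o$, which is immediate from the definition of the dual vector and smoothness of the norm.

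For the necessity direction I would treat the two conditions as arising from two distinct types of variation at $o$. First, displacing the Steiner point from $o$ to $\epsi h$ for arbitrary $h \in \R^n$ changes the cost to first order by $-\epsi\ipr{\sum_i w(t_i)p_i^\ast + w(\sum_i t_i)q^\ast}{h}$; since this must be nonnegative for every $h$ yet is linear in $h$, it must vanish identically, giving \eqref{balancing}. Second, for \eqref{collapsing} and a given $I \subseteq \{1,\dots,n\}$ and $h \in \R^n$, I would split $o$ by introducing a new Steiner point $s := \epsi h$, routing the edges $sp_i$ with $i \in I$ through $s$, and joining $s$ to $o$ by an edge carrying flow $\sum_{i\in I} t_i$. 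The resulting first-order change in cost is
\[
\epsi\Bigl[\,w\bigl(\textstyle\sum_{i\in I} t_i\bigr)\norm{h} - \sum_{i \in I} w(t_i)\ipr{p_i^\ast}{h}\,\Bigr] + o(\epsi),
\]
which must be nonnegative for every $h$. Taking the supremum over $\norm{h} \leq 1$ and using $\sup_{\norm{h}\leq 1}\ipr{z}{h} = \norm{z}^\ast$ yields exactly \eqref{collapsing}.

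For sufficiency, the cost function $C(x) := \sum_i w(t_i)\norm{p_i - x} + w(\sum_i t_i)\norm{q - x}$ is a positive combination of norms composed with affine maps, hence convex; \eqref{balancing} is its smooth first-order critical-point condition at $x = o$, so $o$ is a global minimiser of $C$. For any candidate splitting sending a subset $I$ through a new Steiner point $s$, the cost viewed as a function of $s$ is again convex, and \eqref{collapsing} makes its first-order directional derivative nonnegative as $s$ departs from $o$ in any direction; by convexity, this precludes such an improvement at every scale, not merely infinitesimally. The main obstacle is to extend this argument to competing configurations involving several simultaneous splittings or chains of new Steiner points not directly adjacent to $o$; here the plan is to reduce inductively to the single-splitting case by contracting one new Steiner point at a time along a direction in which the relevant instance of \eqref{collapsing} is active, using convexity of $C$ along each such contraction path to ensure the cost is monotone non-increasing as the competitor collapses onto the star.
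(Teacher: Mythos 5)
Your necessity argument is essentially the paper's: the balancing condition comes from differentiating the cost under a translation of $o$, and the collapsing condition from the one-sided derivative under a split of $o$ sending the edges indexed by $I$ to a new point $\epsi h$. (Your version of the split is in fact cleaner than the paper's, which contains a typo: the connecting edge carries flow $\sum_{i\in I}t_i$, not $\sum_{i=1}^n t_i$, exactly as you have it.) This half of your proposal is sound.

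The sufficiency half has a genuine gap. Convexity of $C(x)=\sum_i w(t_i)\norm{p_i-x}+w(\sum_i t_i)\norm{q-x}$ plus \eqref{balancing} only shows that $o$ is the best \emph{star}; and \eqref{collapsing} plus convexity only rules out improvement by a \emph{single} split adjacent to $o$. What must be shown is that the star beats every Gilbert arborescence $T$ on the given data, with arbitrarily many Steiner points in arbitrary positions and an arbitrary tree topology. Your proposed reduction --- contracting one new Steiner point at a time ``along a direction in which the relevant instance of \eqref{collapsing} is active'' --- does not close: to conclude from convexity that the cost is monotone non-increasing as a Steiner point $s_2$ contracts onto its neighbour $s_1$, you need a sign on the directional derivative at the \emph{collapsed} end of that segment, and when $s_1\neq o$ that derivative is governed by the local geometry at $s_1$, about which \eqref{balancing} and \eqref{collapsing} (which are conditions at $o$ only) say nothing. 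There is also no convex parametrisation joining competitors of different topologies, so ``convexity along each contraction path'' cannot be assembled into a comparison with the star. The paper avoids all of this with a one-shot calibration argument: using \eqref{balancing}, the star's cost equals $\sum_i w(t_i)\ipr{p_i^\ast}{p_i-q}$; telescoping each $p_i-q$ difference along the path $P_i$ in $T$ and regrouping by edges gives $\sum_{e=xy}\ipr{\sum_{i\in S_e}w(t_i)p_i^\ast}{x-y}$, which is bounded above by $\sum_{e=xy}\norm{\sum_{i\in S_e}w(t_i)p_i^\ast}^\ast\,\norm{x-y}\le\sum_{e=xy}w(\sum_{i\in S_e}t_i)\norm{x-y}=C(T)$ by duality and \eqref{collapsing}. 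If you want to keep a convexity-based proof, the workable route is: fix the competitor's topology, realise the star as the degenerate configuration with all Steiner points at $o$, and show the one-sided directional derivative of the (jointly convex) cost is nonnegative in \emph{every} joint direction of the Steiner points --- but verifying that inequality is precisely the telescoping computation above, so you would be reproving the calibration step rather than bypassing it.
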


Note: We think of Condition~\ref{balancing} as a flow-balancing condition at the Steiner point, and Condition~\ref{collapsing} as a condition
that ensures that the Steiner point does not split.

\begin{proof}
$(\Rightarrow)$ We are given that the star is not more expensive than any other Gilbert network with the same sources, sink, flows and weight
function.

In particular, $o$ is the so-called weighted Fermat-Torricelli point of $p_1,\dots,p_n,q$ with weights $t_1,\dots,t_n,\sum_{i=1}^n t_i$,
respectively, which implies the balancing condition \eqref{balancing}. We include a self-contained proof for completeness. If the Steiner point
$o$ is moved to $-te$, where $t\in\R$ and $e\in\R^n$ is a unit vector (in the norm), the resulting arborescence is not better, by the assumption
of minimality. Therefore, the function
\begin{align*}
 \fhi_e(t) &=\sum_{i=1}^n w(t_i)(\norm{p_i+te}-\norm{p_i})\\ &\quad+w(\sum_{i=1}^n t_i)(\norm{q+te}-\norm{q})\geq 0
 \end{align*}
attains its minimum at $t=0$. For $t$ in a sufficiently small neighbourhood of $0$, $p_i+te\neq o$ and $q+te\neq o$, hence $\fhi_e$ is
differentiable. Therefore,
\begin{align*}
0&=\fhi_e'(0) =\lim_{t\to 0} \left(\sum_{i=1}^n w(t_i)\frac{\norm{p_i+te}-\norm{p_i}}{t}\right.\\
& \qquad\qquad \qquad +\left.w(\sum_{i=1}^n t_i)\frac{\norm{q+te}-\norm{q}}{t}\right)\\
&= \sum_{i=1}^n w(t_i)\ipr{p_i^\ast}{e}+w(\sum_{i=1}^n t_i)\ipr{q^\ast}{e}\\
&= \ipr{\sum_{i=1}^n w(t_i)p_i^\ast+w(\sum_{i=1}^n t_i)q^\ast}{e}.
\end{align*}
Since this holds for all unit vectors $e$, \eqref{balancing} follows.

To show \eqref{collapsing} for each $I\subseteq\{1,\dots,n\}$, we may assume without loss of generality that $I\neq\emptyset$ and
$I\neq\{1,\dots,n\}$. Consider the Gilbert network obtained by splitting the Steiner point into two points $o$ and $+te$ ($t\in\R$, $e$ a unit
vector) as follows. Each $p_i$, $i \notin I$, is still adjacent to $o$ with flow $t_i$, and $q$ is joined to $o$ with flow $\sum_{i=1}^n t_i$,
but now each $p_i$, $i\in I$, is adjacent to $te$ with flow $t_i$, and $te$ is adjacent to $o$ with flow $\sum_{i\in I} t_i$, as shown in
Figure~\ref{Fig:split}.

\begin{figure}[tb]
\begin{tikzpicture}[yscale=0.6]
\filldraw (-1,0) coordinate (x) circle (2pt) node[below right] {$te$}; \filldraw (1,0) coordinate (o) circle (2pt) node[below left] {$o$};
\filldraw (-3,2) coordinate (p1) circle (2pt) node[below] {}; \filldraw (-3,0) coordinate (p2) circle (2pt) ; \draw (-3.4,-0.7) node {$p_i, i\in
I$}; \filldraw (-3,-2) coordinate (pn) circle (2pt) node[below] {}; \filldraw (3,2) coordinate (p3) circle (2pt); \draw (3.5,1) node {$p_i,
i\notin I$};
\filldraw (3,0) coordinate (p5) circle (2pt) node[below] {}; \filldraw (3,-2) coordinate (q) circle (2pt) node[below] {$q$}; \draw (-2.7,-0.8)
node {$\vdots$} ; \draw (-2.7,1) node {$\vdots$} ; \draw (2.7,1) node {$\vdots$} ;

\begin{scope}[thick]
\draw[-latex] (p1) -- (-2,1); \draw[latex reversed-] (-2,1) -- (x); \draw[-latex] (p2) -- (-2,0) node [above left] {$t_i$}; \draw[latex
reversed-] (-2,0) -- (x); \draw[-latex] (pn) -- (-2,-1); \draw[latex reversed-] (-2,-1) -- (x); \draw[-latex] (x) -- (0,0) node [above]
{$\displaystyle\sum_{i\in I} t_i$}; \draw[latex reversed-] (0,0) -- (o); \draw[-latex] (p3) -- (2,1) node [below right] {$t_i$}; \draw[latex
reversed-] (2,1) -- (o);
\draw[-latex] (p5) -- (2,0); \draw[latex reversed-] (2,0) -- (o); \draw[-latex] (o) -- (2,-1) node [below left=-6pt] {$\displaystyle\sum_{i=1}^n
t_i$}; \draw[latex reversed-] (2,-1) -- (q);
\end{scope}
\end{tikzpicture}\caption{The Gilbert network obtained by splitting the Steiner point $o$.} \label{Fig:split}
\end{figure}
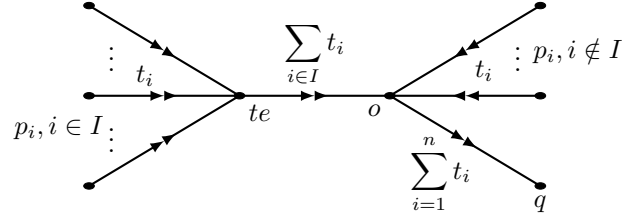

Since the new network cannot be better than the original star, we obtain that for any unit vector $e$, the function
\begin{multline*}
 \psi_e(t)=\sum_{i\in I} w(t_i)(\norm{p_i-te}-\norm{p_i})+w(\sum_{i=1}^n t_i)\abs{t}\geq 0
 \end{multline*}
attained its minimum at $t=0$. Although $\psi_e$ is not differentiable at $0$, we can still calculate as follows:
\begin{align*}
0 &\leq \lim_{t\to 0+}\frac{\psi_e(t)}{t}\\
&= \lim_{t\to0+}\sum_{i\in I} w(t_i)\frac{\norm{p_i-te}-\norm{p_i}}{t}+w(\sum_{i=1}^n t_i)\\
&= \ipr{\sum_{i\in I} w(t_i)p_i^\ast}{-e}+w(\sum_{i=1}^n t_i).
\end{align*}
Therefore, $\ipr{\sum_{i\in I} w(t_i)p_i^\ast}{e}\leq w(\sum_{i=1}^n t_i)$ for all unit vectors $e$, and \eqref{collapsing} follows from the
definition of the dual norm.

\noindent $(\Leftarrow)$ Now assume that $p_1^\ast,p_n^\ast,q$ are dual unit vectors that satisfy \eqref{balancing} and \eqref{collapsing}.
Consider an arbitrary Gilbert arborescence $T$ for the given data. For each $i$, let $P_i$ be the path in $T$ from $p_i$ to $q$, i.e., $P_i =
x_{1}^{(i)}x_2^{(i)}\dots x_{k_i}^{(i)}$, where $x_1^{(i)}=p_i, x_{k_i}^{(i)}=q$, and $x_j^{(i)} x_{j+1}^{(i)}$ are distinct edges of $T$ for
$j=1,\dots,k_i-1$. For each edge $e$ of $T$, let $S_e=\{i:\text{$e$ is on path $P_i$}\}$. Then the flow on $e$ is $\sum_{i\in S_e} t_i$ and the
total cost of $T$ is
\[\sum_{\substack{e=xy\text{ is}\\ \text{an edge of $T$}}} w(\sum_{i\in S_e} t_i)\norm{x-y}.\]
The cost of the star is
\begin{align*}
&\quad\sum_{i=1}^n w(t_i)\norm{p_i}+w(\sum_{i=1}^n t_i)\norm{q} \\
&= \sum_{i=1}^n w(t_i)\ipr{p_i^\ast}{p_i}+w(\sum_{i=1}^n t_i)\ipr{q^\ast}{q} \\
&= \sum_{i=1}^n w(t_i)\ipr{p_i^\ast}{p_i-q}\qquad\text{by \eqref{balancing}}\\
&= \sum_{i=1}^n w(t_i) \sum_{j=1}^{k_i-1}\ipr{p_i^\ast}{x_j^{(i)}-x_{j+1}^{(i)}}
\end{align*}
\begin{align*}
&= \sum_{\substack{e=xy\text{ is}\\ \text{an edge of $T$}}} \ipr{\sum_{i\in S_e} w(t_i) p_i^\ast}{x-y}\\
&\leq \sum_{\substack{e=xy\text{ is}\\ \text{an edge of $T$}}}\norm{\sum_{i\in S_e} w(t_i) p_i^\ast}^\ast\norm{x-y}\\
&\leq \sum_{\substack{e=xy\text{ is}\\ \text{an edge of $T$}}} w(\sum_{i\in S_e} t_i)\norm{x-y} \quad\text{by \eqref{collapsing}}.\qedhere
\end{align*}
\end{proof}

Note that the necessity of the conditions \eqref{balancing} and \eqref{collapsing} holds even if the weight function is not concave. It is only
in the proof of the sufficiency that we need all minimal Gilbert networks with a single source to be arborescences.

\section{Degree of Steiner Points in a Minkowski plane with linear weight function}\label{section:degree}
We now apply the characterisation of the previous section in the two\nobreakdash-\hspace{0pt}dimensional case, assuming further that the weight
function is linear: $w(t)=d+ht$, $d>0, h\geq 0$.

\begin{theorem}
In a smooth Minkowski plane and assuming a linear weight function $w(t)=d+ht$, $d>0, h\geq 0$, a Steiner point in an MGA necessarily has degree
$3$.
\end{theorem}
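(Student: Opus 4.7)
The plan is to argue by contradiction using Theorem~\ref{theorem:gilb-arb-charac}. Suppose some Steiner point $s$ of an MGA has degree $n+1 \geq 4$. After translating so that $s=o$, let $p_1,\dots,p_n$ ($n\geq 3$) be the source neighbours of $s$ with flows $t_1,\dots,t_n$, and let $q$ be the sink neighbour with flow $T=\sum_i t_i$. Set $w_i := w(t_i) = d+ht_i$, $w_0 := w(T) = d+hT$, and let $u_i := p_i^\ast$, $u_0 := q^\ast$ be the corresponding unit dual vectors. Since the restriction of an MGA to the star at any Steiner point is itself locally minimal, Theorem~\ref{theorem:gilb-arb-charac} supplies the balancing identity $\sum_{i=1}^n w_iu_i + w_0u_0 = o$ and the collapsing inequalities $\|\sum_{i\in I} w_iu_i\|^\ast \leq d + h\sum_{i\in I}t_i$ for every non-empty $I\subseteq\{1,\dots,n\}$.

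The first step is to expose the structural consequence of linearity: since $\sum_{i\in I}w_i = |I|d + h\sum_{i\in I}t_i$, the collapsing inequality can be rewritten as
\begin{equation*}
  \Bigl\|\sum_{i\in I} w_iu_i\Bigr\|^\ast \;\leq\; \sum_{i\in I} w_i \;-\; (|I|-1)d,
\end{equation*}
so for each $|I|\geq 2$ there is a strict triangle-inequality deficit of at least $(|I|-1)d>0$. The second step is to use the observation from the preliminaries that smoothness of the primal norm forces strict convexity of the dual norm. Hence $S^\ast$ is a smooth strictly convex planar closed curve, and the $n+1\geq 4$ vectors $u_0,u_1,\dots,u_n$ are pairwise distinct points of $S^\ast$ (equal $u_i=u_j$ with distinct source indices would give $\|w_iu_i+w_ju_j\|^\ast = w_i+w_j$, contradicting the pair collapsing inequality since $d>0$).

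The heart of the proof is then a planar geometric argument on the smooth strictly convex curve $S^\ast$. I would cyclically order $u_0,u_1,\dots,u_n$ around $S^\ast$ and apply pigeonhole: only two of the $n+1$ cyclic gaps can touch $u_0$, so at least $n-1\geq 2$ gaps separate cyclically adjacent \emph{source} indices. For any such adjacent source pair $(i,j)$ I would combine (a) the pair collapsing inequality $\|w_iu_i+w_ju_j\|^\ast \leq w_i+w_j-d$; (b) the collapsing inequality for the non-empty complementary source set $\{1,\dots,n\}\setminus\{i,j\}$, which by balancing rewrites as $\|w_0u_0+w_iu_i+w_ju_j\|^\ast \leq w_0 - h(t_i+t_j)$; and (c) the identity $\|\sum_{k=1}^n w_ku_k\|^\ast = w_0$ from balancing. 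Strict convexity of $\|\cdot\|^\ast$ turns (a) into a quantitative lower bound on the arc of $S^\ast$ between $u_i$ and $u_j$; summing these lower bounds over the at least $n-1$ adjacent source pairs and adding the source–sink separations forced by the singleton-complement collapsing inequalities would exceed the total arc length of $S^\ast$, the desired contradiction.

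The principal obstacle is the final planar step. In the Euclidean case one squares the dual norm, invokes the inner product, and obtains the classical $120^\circ$ separation which, via $(n+1)\cdot 120^\circ > 360^\circ$, immediately closes the argument. In a general smooth Minkowski plane there is no such inner product, so the pivotal technical lemma — and the delicate part of the proof — is to show that for unit dual vectors $u,v\in S^\ast$ and positive weights $\alpha,\beta$, the deficit $\alpha+\beta-\|\alpha u+\beta v\|^\ast$ is a strictly monotonic function of the arc length of $S^\ast$ between $u$ and $v$, yielding a positive lower bound on that arc length in terms of the required deficit. Once this monotonicity lemma is in place, the cyclic pigeonhole argument goes through exactly as in the Euclidean setting, forcing degree $\leq 3$ at every Steiner point.
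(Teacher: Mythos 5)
Your setup is sound and matches the paper's opening moves: applying Theorem~\ref{theorem:gilb-arb-charac} to the star at a Steiner point, rewriting the collapsing condition as a deficit of $(\card{I}-1)d$, and invoking strict convexity of the dual norm are all correct. But the heart of your argument --- the ``monotonicity lemma'' plus the cyclic pigeonhole on arc lengths --- is not merely left unproved; it is the wrong mechanism. The separation of $u_i$ and $u_j$ that a fixed deficit $d$ forces depends on the weights and degenerates as they grow: already in the Euclidean plane one has $\alpha+\beta-\norm{\alpha u+\beta v}\approx\frac{\alpha\beta}{2(\alpha+\beta)}\theta^2$ for small angle $\theta$, so the pair inequality $\norm{w_iu_i+w_ju_j}^\ast\le w_i+w_j-d$ only forces $\theta$ to be of order $\sqrt{d(w_i+w_j)/(w_iw_j)}$, which tends to $0$ as the flows grow (with $h>0$). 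Meanwhile only the two cyclic gaps adjacent to $u_0$ can be large, so the sum of your lower bounds over the $n+1$ gaps approaches, but need not exceed, the total arc length of the dual unit circle; the classical $120^\circ$ pigeonhole is special to equal weights, i.e.\ to the ordinary Steiner problem. The offending configurations are excluded not by a quantitative surplus of angle but by an exact equality analysis, which your pair and pair-complement inequalities alone cannot deliver.

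The paper closes the argument quite differently. It uses the collapsing inequalities for the \emph{initial and terminal segments} $\{1,\dots,j\}$ and $\{j,\dots,n\}$ of the angular order: linearity makes the two upper bounds $d+h\sum_{i\le j}t_i$ and $d+h\sum_{i\ge j}t_i$ sum \emph{exactly} to $\norm{v_j^\ast}^\ast+\norm{w^\ast}^\ast$, while the triangle inequality applied at the intersection of the diagonals $a_0^\ast a_j^\ast$ and $a_{j-1}^\ast a_n^\ast$ of the polygon with edge vectors $v_1^\ast,\dots,v_n^\ast,w^\ast$ gives the reverse inequality. Equality throughout, together with strict convexity of the dual norm, forces $v_j^\ast$ parallel to $w^\ast$ for the interior indices, hence $p_1^\ast=\dots=p_n^\ast=-q^\ast$, a degenerate ``collinear'' configuration in which the primal unit vectors share a supporting line of the unit ball. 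That configuration is then eliminated by an explicit local rerouting through two auxiliary points $s_1,s_2$, whose cost change is $(w(t_1+t_2)-w(t_1)-w(t_2))\norm{s_1}=-d\norm{s_1}<0$. To salvage your outline you would need to bring the segment inequalities into play (your pair and complement inequalities are strictly weaker) and replace the pigeonhole by this equality-case analysis; as written, the pivotal step of your proof fails.
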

\begin{proof}
By Theorem~\ref{theorem:gilb-arb-charac}, an MGA with a Steiner point of degree $n+1$ exists in $\R^2$ with a smooth norm $\norm{\cdot}$ if and
only if there exist dual unit vectors $p_1^\ast,\dots,p_n^\ast,q^\ast\in\R^2$ such that
\[ \sum_{i=1}^n (d+ht_i) p_i^\ast + (d+h\sum_{i=1}^n t_i) q^\ast = o \]
and
\begin{multline*}
 \norm{\sum_{i\in I} (d+ht_i)p_i^\ast}^\ast \leq d+h\sum_{i\in I} t_i \quad\text{for all } I\subseteq\{1,\dots,n\}.
 \end{multline*}
Label the $p_i^\ast$ so that they are in order around the dual unit circle. Let $v_i^\ast=(d+ht_i)p_i^\ast$ and $w^\ast=(d+h\sum_{i=1}^n
t_i)q^\ast$. Then the conditions become
\[ v_1^\ast+\dots+v_n^\ast+w^\ast=o,\]
and
\begin{multline}\label{collapsing2}
\norm{\sum_{i\in I} v_i^\ast}^\ast\leq d+h\sum_{i\in I} t_i \quad \text{for all } I\subseteq\{1,\dots,n\}.
\end{multline}
Thus we may think of the vectors $v_1^\ast,\dots,v_n^\ast,w^\ast$ as the edges of a convex polygon with vertices $a_j^\ast=\sum_{i=1}^j
v_i^\ast$, $j=0,\dots,n$ in this order (see Figure~\ref{Fig:polygon}).

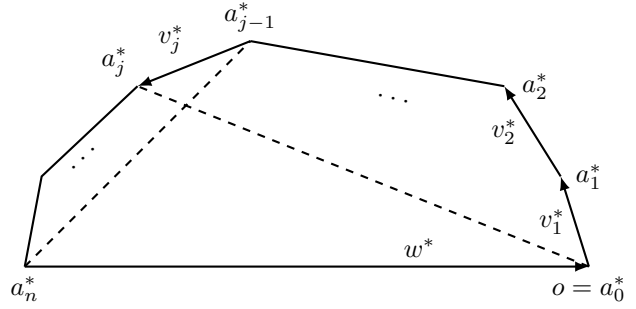
\begin{figure}[htb]
\begin{tikzpicture}[thick, xscale=0.75, yscale=0.6]



\draw[-latex] (0,0) node[below] {$a_n^\ast$} -- (7,0) node[above] {$w^\ast$} -- (10,0) node[below] {$o=a_0^\ast$}; \draw[-latex] (10,0) --
(9.75,1) node[left=0pt] {$v_1^\ast$} -- (9.5,2); \draw[-latex] (9.5,2) node[right=2pt] {$a_1^\ast$} -- (9,3) node[left=2pt] {$v_2^\ast$} --
(8.5,4); \draw (8.5,4) node[right=3pt] {$a_2^\ast$} -- (6.25,4.5) node[below, rotate=25] {$\ddots$} -- (4,5); \draw[-latex] (4,5) node[above]
{$a_{j-1}^\ast$} -- (3,4.5) node[above left=0pt] {$v_j^\ast$}
-- (2,4) node[above left] {$a_j^\ast$} ; 
\draw (2,4) -- (0.3,2) node[below right, rotate=80] {$\ddots$} -- (0,0);
\draw[dashed, thick] (0,0) -- (4,5); \draw[dashed, thick] (10,0) -- (2,4);

\end{tikzpicture}\caption{A polygon with edges corresponding to $v_j^\ast$.}\label{Fig:polygon}
\end{figure}

Assume for the purpose of finding a contradiction that $n > 3$. Then the polygon has at least $4$ sides. Note that the diagonals $a_0^\ast
a_{j}^\ast$ and $a_{j-1}^\ast a_n^\ast$ intersect. Applying the triangle inequality to the two triangles formed by these diagonals and the two
edges $v_j^\ast$ and $w^\ast$ (as illustrated in Figure~\ref{Fig:polygon}), we obtain
\begin{align*}
 \norm{a_j^\ast}^\ast+\norm{a_n^\ast-a_{j-1}^\ast}^\ast
&\geq \norm{v_j^\ast}^\ast+\norm{w^\ast}^\ast\\
&= d+ht_j + d+h\sum_{i=1}^n t_i\\
&= d+h\sum_{i=1}^j t_i + d + h\sum_{i=j}^n t_i\\
&\geq \norm{\sum_{i=1}^j v_i^\ast}^\ast+\norm{\sum_{i=j}^n v_i^\ast}^\ast \qquad\text{by \eqref{collapsing2}}\\
&= \norm{a_j^\ast}^\ast+\norm{a_n^\ast-a_{j-1}^\ast}^\ast.
\end{align*}
Therefore, equality holds throughout, and we obtain equality in the triangle inequality. Since the dual norm is strictly convex, it follows that
$v_j^\ast$ and $w^\ast$ are parallel. This holds for all $j=2,\dots,n-1$. It follows that $p_1^\ast=\dots=p_n^\ast=-q^\ast$. Geometrically this
means that the unit vectors $\frac{1}{\norm{p_i}}p_i$ and $-\frac{1}{\norm{q}}q$ all have the same supporting line on the unit ball. We can
think of this condition on the vectors $p_i$ and $q$ as a generalisation of collinearity to Minkowski space.

Choose a point $s_2$ on the edge $op_i$ such that the line through $s_2$ parallel to $op_n$ intersects the edge $op_1$ in $s_1$, say, with
$s_1\neq o$. See Figure~\ref{Fig:unitball}.
\begin{figure}[tb]
\begin{tikzpicture}[yscale=0.9]
\draw (-2, 1.5) parabola bend (0,2) (2,1.5) -- (2,-1.5) parabola bend (0,-2) (-2,-1.5) -- cycle;
\filldraw (0,0) coordinate (o) circle (2pt) node[below] {$o$}; \filldraw (-4,2) coordinate (p1) circle (2pt) node[below] {$p_1$}; \filldraw
(-4,0) coordinate (p2) circle (2pt) node[below] {$p_2$}; \filldraw (-4,-2) coordinate (pn) circle (2pt) node[below] {$p_n$}; \filldraw (-1,0.5)
coordinate (s1) circle (2pt) node[above] {$s_1$}; \filldraw (-2.5,0) coordinate (s2) circle (2pt) node[above] {$s_2$}; \filldraw (4,1)
coordinate (q) circle (2pt) node[below] {$q$}; \draw (-4,-1) node {$\vdots$} ;

\begin{scope}[thick]
\draw[-latex] (o) -- (3,0.75) node [above] {$\displaystyle\sum_{i=1}^n t_i$}; \draw[latex reversed-] (3,0.75) -- (q); \draw[-latex reversed] (o)
-- (-3,1.5) node [above] {$t_1$}; \draw[latex-] (-3,1.5) -- (p1) ; \draw[-latex] (p2) -- (-3.25,0) node [above] {$t_2$}; \draw[latex reversed-]
(-3.25,0)--(s2); \draw[-latex reversed] (o) -- (-3,-1.5) node [above] {$t_n$}; \draw[latex-] (-3,-1.5) -- (pn) ; \draw[-latex] (s1) --
(-0.5,0.25) node[above right=-2pt] {$t_1+t_2$}; \draw[latex reversed-] (-0.5,0.25) -- (o); \draw[-latex] (s2) -- (-1.75,0.25) node[above]
{$t_2$}; \draw[latex reversed-] (-1.75,0.25) -- (s1); \draw[-latex, dashed] (s2) -- (-1.25,0) node[below left=-1pt] {$t_2$}; \draw[latex
reversed-, dashed] (-1.25,0) -- (o);
\end{scope}
\end{tikzpicture}\caption{Illustration of the proof of Theorem~2 for a unit ball with straight line segments on the boundary.}\label{Fig:unitball}
\end{figure}
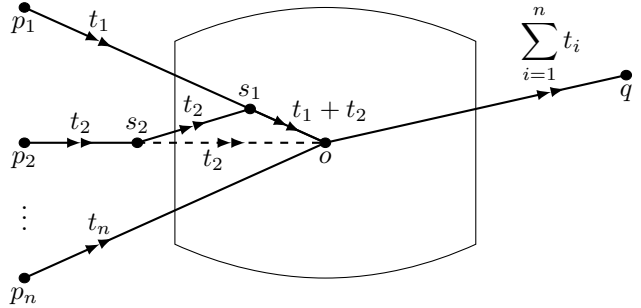
Because of the straight line segments on the boundary of the unit ball,  $\norm{x+y}=\norm{x}+\norm{y}$ for any $x,y$ such that the unit vectors
$\frac{1}{\norm{x}}x$ and $\frac{1}{\norm{y}}y$ lie on this segment. In particular,
\begin{equation}\label{triangleineq}
\norm{s_2-s_1}+\norm{s_1-o}=\norm{s_2-o}.
\end{equation}
Now replace $p_2o$ by the edges $p_2s_2$ and $s_2 s_1$ , replace $p_1o$ by $p_1s_1$ and $s_1o$, and add the flow $t_2$ to $s_1o$. The change in
cost in the new Gilbert arborescence is
\begin{align*}
&\quad (w(t_1)\norm{p_1-s_1}+w(t_1+t_2)\norm{s_1-o}\\
& \qquad\qquad +w(t_2)\norm{p_2-s_2}+w(t_2)\norm{s_2-s_1}) \\
& \qquad - \left(w(t_1)\norm{p_1-o}-w(t_2)\norm{p_2-o}\right)\\
& = -w(t_1)\norm{s_1}-w(t_2)\norm{s_2}+w(t_1+t_2)\norm{s_1}\\
& \qquad +w(t_2)(\norm{s_2}-\norm{s_1}) \qquad\text{by \eqref{triangleineq}}\\
&= (w(t_1+t_2)-w(t_1)-w(t_2))\norm{s_1}\\
&= (d+h(t_1+t_2)-(d+ht_1)-(d+ht_2))\norm{s_1}\\
&= -d\norm{s_1} < 0.
\end{align*}
We have shown that a Steiner point of degree at least $4$ leads to a decrease in cost. In an MGA a Steiner point must then necessarily be of
degree $3$.
\end{proof}

\section{Conclusion}
In this paper we have studied the problem of designing a minimum cost flow network interconnecting $n$ sources and a single sink, each with
known locations and flows, in general finite-dimensional normed spaces. The network may contain other unprescribed nodes, known as Steiner
points. For concave increasing cost functions, a minimum cost network of this sort has a tree topology, and hence can be called a Minimum
Gilbert Arborescence (MGA). We have characterised the local topological structure of Steiner points in MGAs for linear weight functions,
specifically showing that Steiner points necessarily have degree $3$.

\section{Acknowledgments} The authors wish to thank Charl Ras for a number of illuminating discussions on the topic of this paper.


\begin{thebibliography}{99}

\bibitem{bhaskaran-1979} S. Bhaskaran, and F. J. M. Salzborn,
``Optimal design of gas pipeline networks'', \emph{Journal of the Operational Research Society}, vol. 30, pp. 1047-1060, 1979.

\bibitem{brazil-2000} M. Brazil, D. H. Lee, J. H. Rubinstein, D. A.
Thomas, J. F. Weng, and N. C. Wormald, ``Network optimisation of underground mine design'', \emph{The Australian Institute of Mining and
Metallurgy Proceedings}, vol. 305, pp. 57-65, 2000.

\bibitem{cox-1998} C. L. Cox, ``Flow-dependent networks: Existence
and behavior at Steiner points'', \emph{Networks}, vol. 31, pp. 149-156, 1998.

\bibitem{gilbert-1967} E. N. Gilbert, ``Minimum cost communication networks'',
\emph{Bell System Technical Journal}, vol. 46, pp. 2209-2227, 1967.

\bibitem{hwang-1992} F. K. Hwang, D. S. Richards, and P. Winter,
``The Steiner tree problem'', \emph{Annals of Discrete Mathematics vol. 53}, Elsevier Science Publishers, 1992.

\bibitem{lawlor-1994} G. R. Lawlor, and F. Morgan, ``Paired calibrations
applied to soap films, immiscible fluids, and surfaces and networks minimizing other norms'', \emph{Pacific Journal of Mathematics}, vol. 166,
pp. 55-82, 1994.

\bibitem{lee-1976} D. H. Lee, ``Low cost drainage networks'',
\emph{Networks}, vol. 6, pp. 351-371, 1976.

\bibitem{swanepoel-1999}
K. J. Swanepoel,  ``Vertex degrees of Steiner Minimal Trees in $\ell_p^d$ and other smooth Minkowski spaces'', \emph{Discrete Comput.\ Geom.},
vol.\ 21, 437-447, 1999.

\bibitem{swanepoel-2000-2} K. J. Swanepoel, ``The local Steiner
problem in normed planes'', \emph{Networks}, vol. 36, pp. 104-113, 2000.

\bibitem{swanepoel-2007} K. J. Swanepoel, ``The local Steiner problem in finite-dimensional normed spaces'', \emph{Discrete \& Computational
Geometry}, vol.\ 37 pp.~419-442, 2007.

\bibitem{thomas-2006} D. A. Thomas, and J. F. Weng, ``Minimum cost
flow-dependent communication networks'', \emph{Networks}, vol. 48, pp. 39-46, 2006.

\bibitem{Thompson} A. C. Thompson, Minkowski Geometry, Encyclopedia of
Mathematics and its Applications 63, Cambridge University Press, 1996.

\bibitem{trietsch-1985} D. Trietsch, ``Minimal Euclidean networks
with flow dependent costs - the generalized Steiner case'', Discussion Paper No. 655, \emph{Center for Mathematical Studies in Economics and
Management Science}, Northwestern University, Evanston, Illinois, 1985.

\bibitem{trietsch-1999} D. Trietsch, and J. F. Weng,
``Pseudo-Gilbert-Steiner trees'', \emph{Networks}, vol. 33, pp. 175-178, 1999.

\bibitem{thesis} M. Volz, ``Gradient-constrained flow-dependent networks for underground mine design'', PhD thesis, Department of Electrical and Electronic Engineering, University of Melbourne, 2008.

\bibitem{volz-2009} M. Volz, M. Brazil, C.~J.~Ras,  K. J. Swanepoel, and D. A.
Thomas, ``Minimum Gilbert Arborescences'', in preparation, 2009.

\bibitem{weber-1909} A. Weber, ``\"Uber den Standort der Industrien'',
Tubingen, Germany, 1909. (translated by C. J. Friedrich as \emph{Alfred Weber's Theory of the Location of Industries}, Chicago University Press,
Chicago, 1929.)

\end{thebibliography}
\end{document}